\newtheorem{thm}                   {Theorem} % [section]
\newtheorem{prop}         [thm]{Proposition}
\newtheorem{lem}          [thm]{Lemma}
\theoremstyle{definition} 
\theoremstyle{definition}    
\theoremstyle{definition}    
\theoremstyle{definition}
\title{The Grünbaum--Rigby configuration as a special Kárteszi configuration}
\author[1]{G\'abor G\'evay}
\author[2,3]{György Kiss}
\author[3,4,5]{Toma\v{z} Pisanski}
\affil[1]{University of Szeged, Szeged, Hungary}
\affil[2]{Eötvös Loránd University, Budapest, Hungary} 
\affil[3]{FAMNIT, University of Primorska, Koper, Slovenia}
\affil[4]{IAM, University of Primorska, Koper, Slovenia}
\affil[5]{Institute of Mathematics, Physics and Mechanics, Ljubljana, Slovenia}
\begin{document}

\maketitle

\begin{abstract}

In 1990, Branko Grünbaum and John Rigby presented a 4-configuration, known today as the 
\emph{Grünbaum--Rigby configuration}; it is denoted by $\mathrm{GR}(21_4)$. Independently and earlier, in 
1986, Ferenc Kárteszi published a paper in which he proved a theorem in real geometry that gives rise to a 
series of 4-configurations $\mathrm{K}(n;\ell,m)$. In an even earlier paper from 1964, he presented a figure 
which is essentially the same as that given by Grünbaum and Rigby. In this paper, we explore some properties 
of the \emph{K\'arteszi configurations} and in particular show that $\mathrm{GR}(21_4)$ is isomorphic to 
$\mathrm{K}(7;2,3)$.
We present a theorem that gives necessary and sufficient conditions on parameters $n,\ell,m$ such that the 
corresponding configuration $\mathrm{K}(n;\ell,m)$ is realisable as a geometric polycyclic configuration with 
$n$-fold rotational symmetry and no extra incidences.

\end{abstract}
\noindent
{\bf Keywords:} geometric configuration, celestial configuration, Grünbaum--Rigby configuration, Kárteszi configuration, regular n-gon.
\medskip

\noindent
{\bf Mathematics Subject Classification:} 51A20,  51A45, 51E30,  05B30, 01A60.

%%%%%%%%%%%%%%%%%%%%%%%%%%%%%%%%%%%%%%%%%%

\section{Introduction} \label{sect:intro}

%%%%%%%%%%%%%%%%%%%%%%%%%%%%%%%%%%%%%%%%%%

Ferenc Kárteszi (1907--1989) is considered by many as the father of the Hungarian school of finite geometry. 
In 2008 a special issue of \emph{Contributions to Discrete Mathematics} commemorating the centenary of the 
birth of Ferenc Kárteszi was edited by Gábor Korchmáros and Tamás Szőnyi \cite{KSz2008} in which a short 
biography of Kárteszi appeared~\cite{KSz2008a}. In 1972 Kárteszi published a book on finite geometries in 
which one chapter is dedicated to configurations \cite{Kar1972}, first in Hungarian that was later translated 
into English (1976), Italian (1978), and Russian (1980).

In this note we would like to emphasize an important and unfortunately overlooked contribution of Kárteszi 
to the theory of point-line configurations.  

The first book on configurations was written by F.\ Levi~\cite{Levi} in 1929 in German. Later, in 1932 
Hilbert and Cohn-Vossen~\cite{HCV1932} dedicated a whole chapter to projective configurations. The next 
important step is the paper by Coxeter~\cite{Cox1950} in which the notions of \emph{Levi graph} and 
\emph{Menger graph} were introduced (for these graphs, see also~\cite{PS}).

The book of Branko Grünbaum~\cite{Gru} represents the next milestone of the theory of point-line configurations, 
which makes a synthesis and re-organization of contemporary work in the field that is based on the author's 
work on configurations. In some cases, it departs from the established terminology. For instance, the term 
\emph{symmetric configuration} is replaced by \emph{balanced configuration}. The distinction between 
combinatorial, topological, and geometric configuration is introduced. The newly introduced term 
\emph{$k$-configuration} also represents the configurations having $k$ points on a line and $k$ 
lines through a point. 

One chapter is dedicated to the study of $3$-configurations, while another is dedicated to $4$-configurations.

The existence problem of $3$-configurations was solved a long time ago. However, the existence problem 
originated by Grünbaum in his paper with Rigby~\cite{GR} in which they provided the existence of a 
geometric point-line $(21_4)$ configuration, a combinatorial version of which was studied earlier by 
Felix Klein. Due to the paper cited, this configuration is called the \emph{Gr\"unbaum--Rigby configuration}; 
we denote it by $\mathrm{GR}(21_4)$ (for a drawing of it, see Figure~\ref{fig:GR}). 
\begin{figure}[!h]
\begin{center}
\includegraphics[width=0.66\textwidth]{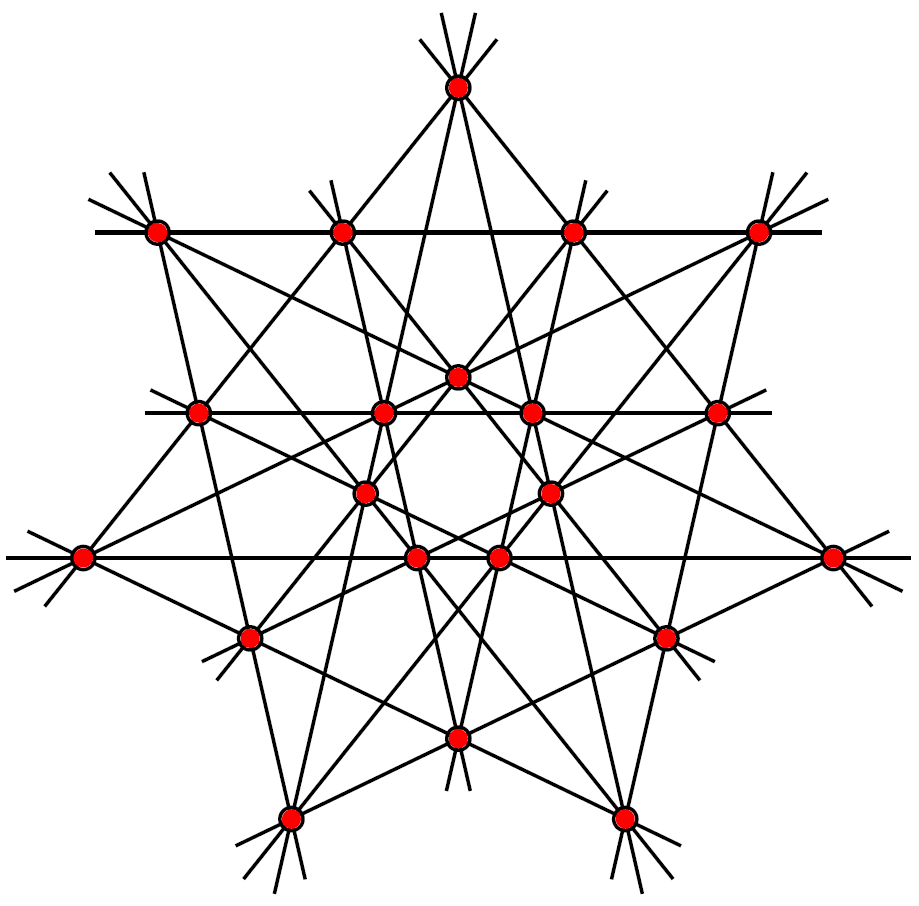}
\caption{The Gr\"unbaum--Rigby configuration $\mathrm{GR}(21_4)$.}
 \label{fig:GR}
\end{center}
\end{figure}

Later, he and some of his coworkers and students wrote a series of papers, mostly in \emph{Geombinatorics}, 
on the problem of values of $n$ for which at least one $(n_4)$ configuration exists. 

In 2003, the same problem was independently attacked by a general theory of \emph{polycyclic configurations}, 
introduced by Boben and Pisanski~\cite{BoPi2003}. A combinatorial configuration is \emph{polycyclic} if it 
has a semi-regular automorphism, i.e.\ an automorphism of order $m$ such that each orbit consisting of either 
points or lines has the same order $m$. A geometric configuration is polycyclic if its underlying combinatorial 
configuration is polycyclic and the corresponding semi-regular automorphism can be realised as a $m$-fold rotation. 
In Grünbaum's book \cite{Gru} a special class of polycyclic configurations is named 
\emph{$d$-astral configurations}~\cite[Definition 3.5.1]{Gru}. 
His student Leah Berman named them $d$-celestial configurations. 
Two notations were used: Boben and Pisanski specified them as 
$$C_4(m;(p_1,p_2, \ldots p_d),(q_1,q_2, \ldots, q_d),t)$$ 
configurations, whereas Grünbaum and Berman denoted them as 
$$m\# (p_1,q_1;p_2,q_2;\ldots; p_d,q_d)$$. 
Since geometric $d$-celestial configurations exist only if parameter 
$$t = ((p_1+p_2+ \ldots +p_d)-(q_1+q_2+ \ldots + q_d))/2$$ is integer,
%\gga{this has not yet been checked} 
the parameter $t$ can be dropped and the latter notation prevailed.

Let 
$$m\#(p_1,q_1;p_2,q_2;\ldots; p_d,q_d)$$ 
be a $d$-celestial $4$-configuration, and let 
$$P = (p_1,p_2, \ldots,p_d)$$ 
and 
$$Q = (q_1,q_2,\ldots,q_d)$$ 
be the multisets defined by the configuration parameters.
The collection of parameters producing a genuine geometric celestial configuration (with no extra incidences) is 
called a \emph{valid set of parameters}.

A couple of years ago, Berman and Berardinelli~\cite{BB} gave a set of four axioms that identify valid parameters 
for celestial 4-configurations. They are conveniently named: even condition (A1), order condition (A2), cosine 
condition (A3), and substring condition (A4). These axioms already appear in a less systematic way in~\cite{BoPi2003} 
by Boben and Pisanski. If we are interested in connected configurations, anothr axiom 
reads $\gcd(P,Q,m) = 1$ and may be denoted as connectivity condition (A5), has to be added. 

Grünbaum defines the configuration to be \emph{trivial} celestial configuration if the multisets $P$ 
and $Q$ are equal. For trivial celestial configurations, the axioms (A1) and (A3) are always satisfied. 
The necessary conditions for the parameters to be valid for a trivial $3$-celestial configuration 
then $P = Q = \{a,b,c\}$ has to be a set (and not a multiset) and the configuration has to be described as $m\#
(a,b;c,a;b,c)$ to satisfy axiom (A2).  Moreover, any of the six permutations of $P$ gives rise to the same 
configuration. Hence, we may choose $a < b < c$. For larger values of $d$ the same multiset $P$ may be used 
for different configurations. It turns out that some conditions need to be imposed on the parameters to 
guarantee that the structure is indeed a combinatorial $4$-configuration. However, when realizing them 
geometrically, some false incidences may appear.

Quite recently the second author pointed to us the paper~\cite{Kar1986} written in Italian with a theorem of 
Kárteszi that implies the existence of the Grünbaum--Rigby configuration. A reference~\cite{Kar65} in this 
paper was difficult to obtain. It was published in 1964 in a lesser known journal from Messina. When we got 
the copy of that paper thanks to Gábor Korchmáros, we were stunned by the fact that it not only describes 
a $(21_4,21_4)$ configuration, which in our notation is the  $\mathrm{GR}(21_4)$ configuration, but also provides the 
familiar picture of the Grünbaum--Rigby configuration (see our Figure~\ref{fig:GR} above).  

Here we note that one has to be careful, since the Grünbaum--Rigby configuration has been shown to be not the 
only $(21_4)$ example; in the paper \cite{BGP2024} another non-equivalent $(21_4)$ configuration was constructed.

The Kárteszi Theorem (see next section) is a basis for the construction of a three-parameter family of 
$4$-configurations that we denote by $\mathrm{K}(n;\ell,m)$. Here we propose the name \emph{Kárteszi 
configurations} for them. This family forms a subfamily of the trivial $3$-celestial $4$-configurations; 
namely, $\mathrm{K}(n;\ell,m) = n \#(1,\ell;m,1;\ell,n)$. Any Kárteszi configuration is connected. 
%In \cite{BoPi2003} the necessary and sufficient conditions 
%for a celestial configuration to be geometrically realizable are given. 
Any Kárteszi configuration with $1 < a< b$ is geometrically realizable  with no extra incidences if and only 
if the following Proposition is satisfied.

\begin{prop}
The Kárteszi configuration $\mathrm{K}(n;\ell,m)$ has no extra incidence if and only if there is no 
integer $x$ such that the 2-celestial 4-configuration (= astral configuration) $n \#(\ell,m,x)$ exists.
\end{prop}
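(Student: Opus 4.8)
The plan is to convert the combinatorial/geometric notion of an extra incidence into a single trigonometric equation and then to recognise that equation as the existence criterion for the astral configuration $n\#(\ell,m,x)$. First I would fix the standard realisation of the trivial $3$-celestial configuration $\mathrm{K}(n;\ell,m)=n\#(1,\ell;m,1;\ell,m)$ with $n$-fold rotational symmetry: the three point orbits lie on concentric circles of radii $\rho_0,\rho_1,\rho_2$, with vertices along the rays $e^{2k\pi i/n}$ and their bisectors, while the three line orbits consist of lines at fixed distances $\delta_0,\delta_1,\delta_2$ from the centre. The defining incidences, together with the elementary chord relation, determine all the $\rho_j$ and $\delta_i$ up to scale as products of the cosines $\cos\frac{\pi}{n}$, $\cos\frac{\ell\pi}{n}$, $\cos\frac{m\pi}{n}$; this is precisely the data encoded by the cosine condition (A3).

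The key observation I would exploit is that, since every relevant direction is a multiple of $\pi/n$, a point of orbit $j$ at radius $\rho_j$ is incident with a line of orbit $i$ at distance $\delta_i$ exactly when $\rho_j\cos\frac{k\pi}{n}=\delta_i$ for some integer $k$. The designed incidences account for the prescribed triples $(i,j,k)$; an extra incidence is any further integer solution. Substituting the cosine-product expressions for $\rho_j$ and $\delta_i$ converts this incidence equation into a relation among $\cos\frac{\ell\pi}{n}$, $\cos\frac{m\pi}{n}$ and $\cos\frac{x\pi}{n}$, where $x$ is the integer index of the accidental direction. After using the rotational symmetry to discard the solutions that merely reproduce the designed incidences or that involve only the trivial span $1$, the single essential case couples the two orbits joined by the spans $\ell$ and $m$, and the surviving relation is exactly the cosine condition (A3) governing the existence of the astral configuration $n\#(\ell,m,x)$.

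With this dictionary in hand I would establish the biconditional in its contrapositive form: $\mathrm{K}(n;\ell,m)$ admits an extra incidence if and only if some integer $x$ makes $n\#(\ell,m,x)$ exist. For one direction, an extra incidence produces an integer solution (hence an integer $x$) of the cosine equation, and I would then verify that this $x$ satisfies the remaining validity axioms (A1), (A2), (A4), so that $n\#(\ell,m,x)$ is a genuine astral configuration rather than a purely formal solution. Conversely, the existence of $n\#(\ell,m,x)$ supplies, through the same equation, an integer $k$ solving $\rho_j\cos\frac{k\pi}{n}=\delta_i$ distinct from the prescribed ones, that is, a point of $\mathrm{K}(n;\ell,m)$ lying on a line it was never designed to meet.

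I expect the main obstacle to be twofold. First is the case analysis: one must run through all orbit pairs $(i,j)$ and show, using the rotational symmetry and the presence of the span $1$, that every potential extra incidence collapses to the single $\ell$--$m$ relation, so that no family of coincidences is missed. Second, and more delicate, is upgrading the correspondence to a true equivalence: a formal integer solution of the cosine equation must be matched with a valid value of $x$ meeting all four Berman--Berardinelli axioms (and conversely), which is exactly where the integrality of $x$ together with the order and substring conditions (A2), (A4) must be handled carefully to exclude spurious solutions on either side.
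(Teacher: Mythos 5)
Your plan is essentially the ``Berman route'' that the authors explicitly mention and then deliberately avoid. The paper never proves this proposition by an analytic computation with radii and pedal distances; it states it as a translation principle, remarks that combining it with Berman's characterization of astral $4$-configurations \cite{Be2001} (Theorem 3.6.1 in \cite{Gru}) would yield Theorem~\ref{thm:extra-inc}, and then gives an \emph{independent}, synthetic treatment instead. There the key step is Lemma~\ref{3-bol4}: if a vertex of $\mathcal{A}_k$ lies on an unintended $r$-th diagonal of $\mathcal{A}_1$, the reflection in the perpendicular bisector of that diagonal forces a second vertex of $\mathcal{A}_k$ onto it, so the diagonal of $\mathcal{A}_1$ coincides with a diagonal of $\mathcal{A}_k$ --- which is precisely the astral coincidence --- and Theorem~\ref{thm:chord-belt} then attaches the third polygon; the enumeration of when this happens is done via the Poonen--Rubinstein classification of concurrent diagonals rather than via the cosine condition. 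Your approach buys a uniform algebraic criterion tied directly to axiom (A3), at the price of importing Berman's theorem; the paper's buys independence from that theorem and produces the explicit finite list of exceptions.

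Two points need tightening before your outline is a proof. First, the reduction does not collapse to ``a single essential case coupling the spans $\ell$ and $m$.'' Writing $c_j=\cos(j\pi/n)$, the genuinely distinct accidental incidences are: a vertex of $\mathcal{A}_1$ on a common line ($c_1c_x=c_\ell c_m$), a vertex of $\mathcal{A}_\ell$ on an $m$-th diagonal of $\mathcal{A}_1$ ($c_\ell c_x=c_1c_m$), and a vertex of $\mathcal{A}_m$ on an $\ell$-th diagonal ($c_mc_x=c_1c_\ell$). These are three different pairings of the four spans $\{1,\ell,m,x\}$, hence three different astral candidates, and all of them must be swept into your reading of $n\#(\ell,m,x)$; note that this also explains why the exceptional parameters in Theorem~\ref{thm:extra-inc} occur in triples $\{\ell,m,x\}$. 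Second, you should justify the ansatz $\rho_j\cos(k\pi/n)=\delta_i$ with $k$ an \emph{integer}: it rests on the fact (from Lemma~\ref{lem:rotate}) that the vertices of $\mathcal{A}_k$ lie on rays at angles that are integer multiples of $\pi/n$ and that the pedal points of all configuration lines do as well. With those repairs the argument goes through, but it is a different proof from the one the paper is built on.
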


% \gga{I checked the text up to this point, from the beginning of the section}

In such a case all three configurations 
$\mathrm{K}(n;\ell,m),\mathrm{K}(n;\ell,x),\mathrm{K}(n;m,x)$ are realizable.

Berman \cite{Be2001} was the first to characterize valid parameters of astral 4-configurations. Her result is 
presented in Grünbaum's book as Theorem 3.6.1. 
By selecting the configurations having one parameter equal to 1, our Theorem \ref{thm:extra-inc} is obtained.
However, we give an independent proof of  this theorem that is based on Kárteszi theorem and a result of Poonen 
and Rubinstein.
Figure \ref{fig:KartesziPaper} taken from~\cite{Kar65} shows without doubt that Kárteszi constructed the configuration 
$\mathrm{K}(7;2,3)$ in 1964/65. 
\begin{figure} [!h]
\begin{center}
\includegraphics[width=0.785\linewidth]{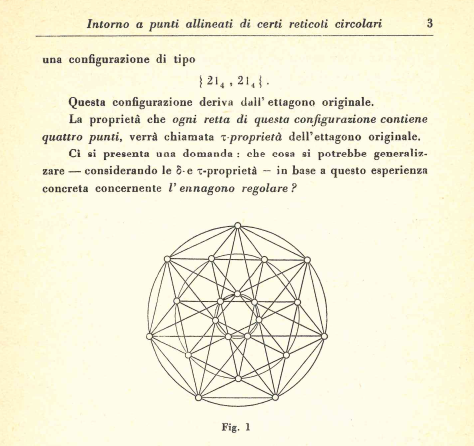}
   \caption{Drawing of $\mathrm \mathrm{K}(7;2,3)$ taken from the paper~\cite{Kar65} of Kárteszi.}
   \label{fig:KartesziPaper}
\end{center}
\end{figure}

It seems impossible to have been overlooked for such a long time. Unfortunately, there were too many adverse
factors. 

\begin{itemize}
\item 
The paper in which this configuration appears is written in Italian.
\item
The journal is not covered by \emph{zbMATH} or \emph{MathRev}.
\item
Kárteszi in his book does not mention any 4-configuration. Apparently, he did not find his result or later 
theorem sufficiently interesting. Hence, even when the book was translated into English, his discovery 
remained hidden. The term ``geometric configuration" that is used in his book nowadays represents a 
``linear configuration".
\item
The book does not cite Coxeter's seminal paper of 1950~\cite{Cox1950}. So, he uses the term \emph{incidence graph} 
and not \emph{Levi graph}. 
\item
Grünbaum in his book does not mention that of Kárteszi, although in the latter a whole chapter is devoted to 
configurations.
\item
On p.\ 212, Grünbaum mentions the 1986 paper of Kárteszi but does not give him credit for $(21_4)$.
\end{itemize}

Our intention is to point out the significance of the Kárteszi theorem to the theory of celestial configurations.

%%%%%%%%%%%%%%%%%%%%%%%%%%%%%%%%%%%%%%%%%%%%%%%%%%%%%%%%%%%%%%%%%%%%%%%%%%

\section{From Kárteszi Theorem to Kárteszi configurations} \label{sect:K}

%%%%%%%%%%%%%%%%%%%%%%%%%%%%%%%%%%%%%%%%%%%%%%%%%%%%%%%%%%%%%%%%%%%%%%%%%%

For the sake of completeness, we present a short proof of the K\'arteszi Theorem.
We remark that in this theorem, and throughout the paper, by a regular $n$-gon, we mean a \emph{convex} 
regular $n$-gon.

\begin{thm}[Kárteszi, 1964~\cite{Kar65}]
\label{thm:chord-belt}
Let $\mathcal{A}_1=A_0A_1\dots A_{n-1}$ be a regular $n$-gon on the classical Euclidean plane. For any
$k\in \{1,2,\dots , \lfloor\frac{n-1}{2}\rfloor \}$, 
the $k$-th diagonals of 
$\mathcal{A}_1,$ $A_0A_k,\, A_1A_{k+1},\, \dots ,A_{n-1}A_{k+1}$, 
form another regular $n$-gon $\mathcal{A}_k$, which we call the $k$-th $n$-gon of $\mathcal{A}_1$. 
Then the $m$-th diagonals of the $\ell $-th $n$-gon and the $\ell$-th diagonals of the $m$-th 
$n$-gon of $\mathcal{A}_1$ have a common line for any 
$\ell ,m \in \{1,2,\dots, \lfloor\frac{n-1}{2}\rfloor \}$.
\end{thm}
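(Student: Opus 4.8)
The plan is to use complex coordinates together with the envelope description of the diagonals. I would place $\mathcal{A}_1$ on the unit circle, $A_j=\omega^j$ with $\omega=e^{2\pi i/n}$, and exploit the elementary fact that a chord of a circle centred at the origin is perpendicular to the radius through its midpoint. Consequently the foot of the perpendicular from the centre to the chord \emph{is} its midpoint, and since a line is determined by this foot I would track feet throughout. For the $k$-th diagonal $A_jA_{j+k}$ the midpoint is
$$\tfrac12(\omega^j+\omega^{j+k})=\cos\!\big(\tfrac{\pi k}{n}\big)\,e^{i\pi k/n}\,\omega^{j},$$
using $1+e^{2\pi ik/n}=2\cos(\pi k/n)e^{i\pi k/n}$. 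Hence all $k$-th diagonals have foot-distance $\cos(\pi k/n)$ from the centre, with feet equally spaced in angle by $2\pi/n$. Because $1\le k\le\lfloor(n-1)/2\rfloor<n/2$, this distance is strictly positive, so the diagonals are exactly the tangent lines to a circle of radius $\cos(\pi k/n)$ at $n$ equally spaced points, and such tangents are precisely the side-lines of a convex regular $n$-gon. This settles the first assertion, and recording the data identifies $\mathcal{A}_k$ as the regular $n$-gon with apothem $\cos(\pi k/n)$, hence circumradius $R_k=\cos(\pi k/n)/\cos(\pi/n)$, whose vertices sit at angles $\tfrac{\pi(k+1)}{n}+\tfrac{2\pi j}{n}$ (the bisectors of consecutive feet); the case $k=1$ returns $\mathcal{A}_1$ itself, fixing the indexing.

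For the final commutativity statement I would apply the very same midpoint formula to $\mathcal{A}_\ell$ in place of $\mathcal{A}_1$. Writing the vertices of $\mathcal{A}_\ell$ as $V^{(\ell)}_j=R_\ell\,e^{i(\theta_\ell+2\pi j/n)}$ with $\theta_\ell=\pi(\ell+1)/n$, the $m$-th diagonal $V^{(\ell)}_jV^{(\ell)}_{j+m}$ has foot
$$R_\ell\cos\!\big(\tfrac{\pi m}{n}\big)\,e^{i(\theta_\ell+\pi m/n+2\pi j/n)}=\frac{\cos(\pi\ell/n)\cos(\pi m/n)}{\cos(\pi/n)}\;e^{\,i\pi(\ell+m+1)/n}\;e^{\,2\pi ij/n}.$$
This expression is manifestly invariant under interchanging $\ell$ and $m$: both its modulus and its argument depend only on the unordered pair $\{\ell,m\}$ together with $j$. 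Therefore the set of feet of the $m$-th diagonals of $\mathcal{A}_\ell$ equals the set of feet of the $\ell$-th diagonals of $\mathcal{A}_m$, and since a line is determined by its foot, the two families of lines coincide. In particular each line of one family is a line of the other, which is the asserted common line, and the argument is uniform in all admissible $\ell,m$.

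The computations here are routine, so the care lies entirely in the structural steps rather than in any estimate. The step I expect to demand the most attention is the recognition that the $n$ equally spaced tangent lines genuinely assemble into a single regular $n$-gon $\mathcal{A}_k$ with the \emph{correct} phase — that is, pinning the vertex angles as the bisectors of consecutive feet and confirming the $k=1$ normalisation so that ``the $m$-th diagonal of $\mathcal{A}_\ell$'' connects vertices that are genuinely $m$ apart in cyclic order. I would also make explicit the nondegeneracy remark that $\cos(\pi k/n)>0$ throughout the range $k\le\lfloor(n-1)/2\rfloor$, which excludes the degenerate diameter through the centre and guarantees that every foot is a well-defined nonzero point uniquely determining its line.
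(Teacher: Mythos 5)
Your proof is correct, but it takes a genuinely different route from the paper's. The paper argues synthetically: it first proves a lemma exhibiting a rotational similarity $\varphi_k$ centred at the centre $C$ of the polygon, with angle $\frac{(k-1)\pi}{n}$ and ratio $\cos(\frac{k\pi}{n})/\cos(\frac{\pi}{n})$, which carries $\mathcal{A}_1$ to $\mathcal{A}_k$ and the side-line $A_jA_{j+1}$ to the diagonal-line $A_jA_{j+k}$; the theorem then follows in two lines because $\varphi_\ell$ and $\varphi_m$ have the same centre and therefore commute, so $\varphi_m(\varphi_\ell(A_0A_1))=D_0D_m$ and $\varphi_\ell(\varphi_m(A_0A_1))=B_0B_\ell$ are the same line. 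You instead coordinatize with complex numbers and parametrize each chord by the foot of the perpendicular from the centre (its midpoint), arriving at a closed formula for the foot of the $m$-th diagonal of $\mathcal{A}_\ell$ that is manifestly symmetric in $\ell$ and $m$. The two arguments are the same at heart --- multiplying feet by $R_k e^{i(\cdot)}$ is exactly the paper's spiral similarity written in coordinates, and the symmetry of your formula is precisely the commutativity the paper invokes --- but your version buys an explicit identification of every line by modulus and argument of its foot, gives the slightly sharper conclusion that the two families of lines coincide index by index rather than merely ``have a common line,'' and makes the nondegeneracy condition $\cos(\pi k/n)>0$ explicit; the paper's version buys a coordinate-free, picture-friendly argument that isolates the similarity $\varphi_k$ as a clean standalone lemma. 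The two points you flag as needing care (the phase of the vertices of $\mathcal{A}_k$, so that ``$m$-th diagonal'' really means vertices $m$ apart in cyclic order, and the exclusion of diameters) are indeed the only delicate spots, and you address both.
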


\begin{figure} [h]
    \centering
    \includegraphics[width=0.825\linewidth]{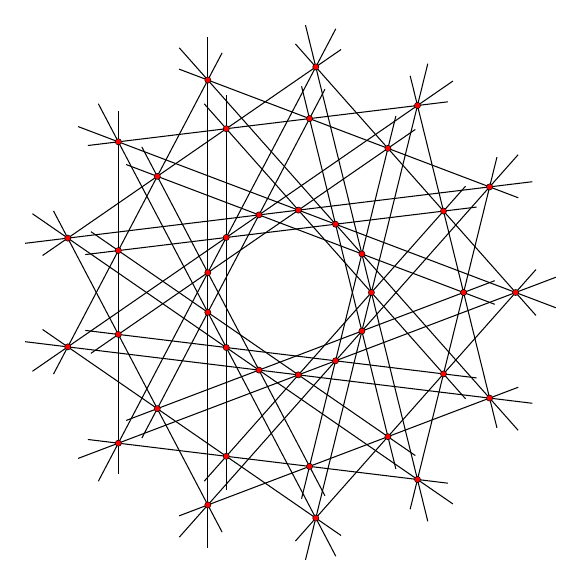}
    \caption{Example of an application of Kárteszi Theorem for $n  = 13,\, \ell = 3$ and $m = 5$ producing 
             $\mathrm K(13;3,5)$.}
    \label{fig:example(13,3,5)}
\end{figure}

First, we prove a lemma.

\begin{lem}
\label{lem:rotate}
Let 
$\mathcal{R}=R_0R_1\dots R_{n-1}$ be a regular $n$-gon whose centre is $C$, and let 
$\mathcal{T}=T_0T_1\dots T_{n-1}$ be the $k$-th $n$-gon of $\mathcal{R}$, where
$T_j=R_{j-1}R_{j-1+k}\cap R_jR_{j+k}$, and let $\beta =\frac{2\pi}{n}$. Let $\varphi _k$ denote the rotational 
similarity $\varphi _k$ with centre $C$, angle $\frac{k-1}{2}\beta$, and ratio 
$\frac{\cos \frac{k\beta }{2}}{\cos \frac{\beta }{2}}$.
Then $\varphi _k(R_j)=T_j$ and  $\varphi _k$ map the line
$R_jR_{j+1}$ to the line $R_jR_{j+k}$ for $j=0,1,\dots ,n-1$. 
If  $F$ and $G$ denote the midpoints of the line segments $R_0R_1$
and $R_0R_k$, respectively, then  $\varphi _k(F)=G.$ 
\end{lem}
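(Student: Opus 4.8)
The plan is to prove Lemma~\ref{lem:rotate} by
setting up explicit complex-number coordinates for the regular $n$-gon and
then verifying the three assertions (the image of a vertex, the image of an
edge-line, and the image of the midpoint $F$) by direct computation. I would
place the centre $C$ at the origin and take the vertices of $\mathcal{R}$ to
be the $n$-th roots of unity scaled by the circumradius, say
$R_j=\rho\,\zeta^{\,j}$ with $\zeta=e^{i\beta}$ and $\beta=\tfrac{2\pi}{n}$.
The key preliminary observation is that a chord $R_{j-1}R_{j-1+k}$ can be
written compactly: its two endpoints are $\rho\zeta^{\,j-1}$ and
$\rho\zeta^{\,j-1+k}$, so the chord's midpoint is
$\rho\zeta^{\,j-1}\tfrac{1+\zeta^{k}}{2}$ and, factoring out
$\zeta^{(k)/2}$, one sees the chord points in the direction determined by the
argument $\tfrac{k}{2}\beta$. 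This is the arithmetic heart of the whole
lemma.

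First I would compute the intersection point $T_j=R_{j-1}R_{j-1+k}\cap
R_jR_{j+k}$. By the rotational symmetry of the configuration it suffices to
locate one such point, say $T_0$ (or to note that $T_j=\zeta\,T_{j-1}$), and
then express $T_0$ in closed form. Using the standard fact that two chords of
a circle subtending equal arcs meet at a point whose distance from the centre
scales like $\tfrac{\cos(k\beta/2)}{\cos(\beta/2)}$ relative to the vertices,
I would obtain
$T_j=\rho\,\frac{\cos\frac{k\beta}{2}}{\cos\frac{\beta}{2}}\,
e^{\,i\frac{k-1}{2}\beta}\,\zeta^{\,j}$.
Reading off the modulus and argument of the scalar factor in front of
$\zeta^{\,j}=R_j/\rho$ shows immediately that $T_j$ is obtained from $R_j$ by
the rotational similarity $\varphi_k$ of angle $\tfrac{k-1}{2}\beta$ and ratio
$\tfrac{\cos(k\beta/2)}{\cos(\beta/2)}$, which is exactly the claim
$\varphi_k(R_j)=T_j$. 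In particular the image points $T_j$ form a regular
$n$-gon, recovering the statement of Theorem~\ref{thm:chord-belt}.

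Next I would verify that $\varphi_k$ sends the edge-line $R_jR_{j+1}$ to the
line $R_jR_{j+k}$. Since a rotational similarity is a conformal map of the
plane, it maps lines to lines, so it is enough to check that the direction of
$R_jR_{j+1}$ is rotated by the correct angle and that one point of the target
line is hit. The edge $R_jR_{j+1}$ has direction
$\arg(\zeta^{\,j+1}-\zeta^{\,j})=j\beta+\tfrac{\beta}{2}+\tfrac{\pi}{2}$,
while the diagonal $R_jR_{j+k}$ has direction
$j\beta+\tfrac{k\beta}{2}+\tfrac{\pi}{2}$; the difference is exactly
$\tfrac{(k-1)}{2}\beta$, the rotation angle of $\varphi_k$, so the directions
match. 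For incidence it suffices to observe that $\varphi_k$ fixes no relevant
point off the figure but does map the common vertex $R_j$ of the edge to
$T_j$, which lies on the line $R_jR_{j+k}$ by construction (as
$T_j=R_{j-1}R_{j-1+k}\cap R_jR_{j+k}$ lies on $R_jR_{j+k}$). Combining the
correct direction with one incident point pins down the image line.

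Finally, the midpoint claim $\varphi_k(F)=G$ follows with no extra work:
$F=\tfrac{1}{2}(R_0+R_1)$ and $G=\tfrac{1}{2}(R_0+R_k)$, and since
$\varphi_k$ is affine (indeed linear once composed with the translation
fixing $C$, but here $C$ is the origin so $\varphi_k$ is $\mathbb{C}$-linear,
$z\mapsto\lambda z$ with
$\lambda=\tfrac{\cos(k\beta/2)}{\cos(\beta/2)}e^{\,i(k-1)\beta/2}$), it
commutes with midpoints; so it suffices to check $\varphi_k(R_0)=$ the point
that together with $\varphi_k(R_1)$ has midpoint $G$, which is immediate from
$\varphi_k(R_0)=T_0$ and $\varphi_k(R_1)=T_1$ once one notes that the segment
$R_0R_k$ is the image of $R_0R_1$ under the map already established. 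I expect
the only genuinely delicate point to be the closed-form evaluation of the
chord intersection $T_j$ and the verification that its scalar factor has
precisely the stated modulus and argument; everything after that is a
consequence of the linearity and conformality of $\varphi_k$. A clean way to
avoid trigonometric grinding is to work throughout with the complex scalar
$\lambda$ above and use the identity
$1+\zeta^{k}=2\cos\tfrac{k\beta}{2}\,e^{\,ik\beta/2}$, which converts every
cosine ratio into a quotient of such factors.
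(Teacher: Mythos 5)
Your proposal is correct in substance but takes a different route from the paper: you work analytically with complex coordinates $R_j=\rho\zeta^j$, derive the closed form $T_j=\lambda R_j$ with $\lambda=\frac{\cos(k\beta/2)}{\cos(\beta/2)}e^{i(k-1)\beta/2}$, and read the similarity off the scalar $\lambda$, whereas the paper argues synthetically: it uses the reflection axis $CG$ to show $G$ is the midpoint of $T_0T_1$, then computes the angle $FCG\angle=\frac{k-1}{2}\beta$ and the ratio $CG:CF=\cos\frac{k\beta}{2}:\cos\frac{\beta}{2}$ from the right triangles $R_0FC$ and $R_0GC$, and deduces $\varphi_k(R_0)=T_0$, $\varphi_k(F)=G$ from these. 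The two computations are really the same (your identity $1+\zeta^k=2\cos\frac{k\beta}{2}e^{ik\beta/2}$ is the complex form of the paper's right-triangle relation $CG=R_0C\cos\frac{k\beta}{2}$); your version buys uniformity in $j$ and makes the linearity of $\varphi_k$ explicit, at the cost of having to actually carry out the chord-intersection computation that you only sketch (it is routine: intersecting the two chord equations $\mathrm{Re}(ze^{-ik\beta/2})=\rho\cos\frac{k\beta}{2}$ and $\mathrm{Re}(ze^{-i(k-2)\beta/2})=\rho\cos\frac{k\beta}{2}$ gives exactly your formula). One justification in your last step is wrong as stated: the image of the \emph{segment} $R_0R_1$ under $\varphi_k$ is $T_0T_1$, not $R_0R_k$, so you cannot conclude $\varphi_k(F)=G$ by saying ``the segment $R_0R_k$ is the image of $R_0R_1$.'' You need the additional fact that the midpoint of $T_0T_1$ coincides with the midpoint $G$ of $R_0R_k$ --- which the paper gets from the symmetry about the line $CG$, and which you can get in one line from your own setup: $\varphi_k(F)=\lambda\rho\frac{1+\zeta}{2}=\rho\cos\frac{k\beta}{2}e^{ik\beta/2}=\rho\frac{1+\zeta^k}{2}=G$. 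With that repair the argument is complete.
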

\begin{figure} [h]
    \centering
    \includegraphics[width=0.825\linewidth]{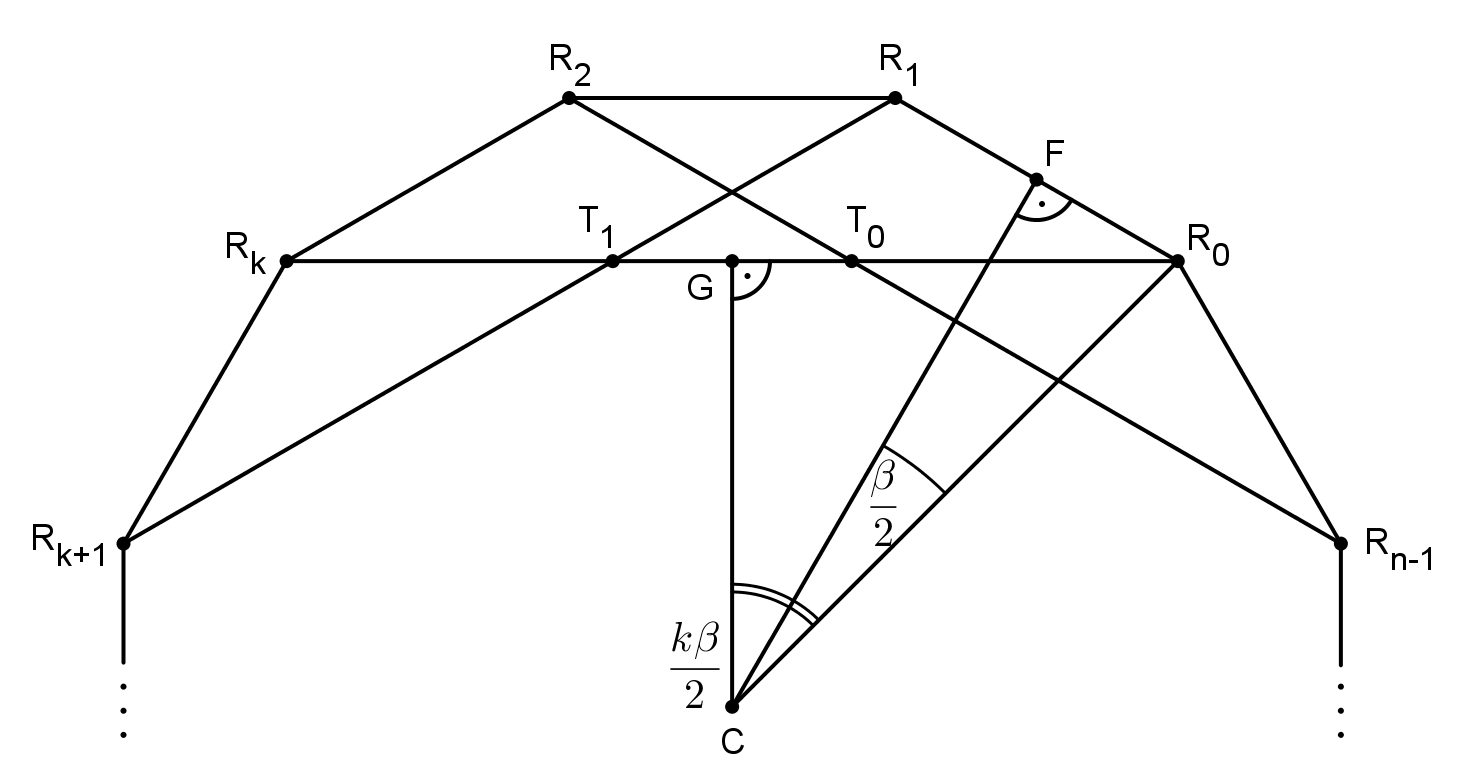}
    \caption{Illustration for Lemma~\ref{lem:rotate}.}
    \label{fig:placeholder}
\end{figure}

\begin{proof}
Due to the rotational symmetry of $\mathcal{R}$, it is sufficient to prove the statements for $j=0.$ The line $CG$ is an axis of symmetry of $\mathcal{R}$, hence 
it is also an axis of symmetry of $\mathcal{T}$. So $G$ is the midpoint of the line
segment $T_0T_1$. Since
$R_0CF\angle =\frac{1}{2}R_0CR_1\angle =\frac{\beta}{2}$ and 
$R_0CG\angle =\frac{1}{2}R_0CR_k\angle =\frac{k\beta}{2},$ we get
$$R_0CT_0\angle = FCG\angle =\frac{k-1}{2}\beta.$$ 

The triangles $R_0FC$ and $R_0GC$ are right-angled, and hence 
$CF =R_0C\cos \frac{\beta }{2}$ and $CG =R_0C\cos \frac{k\beta }{2}.$ So
$$T_0C\colon R_0C= GC \colon FC = \cos \frac{k\beta }{2}\colon \cos \frac{\beta }{2}.$$
Thus, $\varphi _k(R_0)=T_0$ and $\varphi _k(F)=G$.
%hence $\varphi _k$ maps $\mathcal{R}$ to $\mathcal{T}$. 
The line $R_0R_{1}$ is mapped to the line $R_0R_{k},$ because the points $T_0$ and $T_1$ are on the line $R_0R_{k}$. 
The lemma is proven.
\end{proof}

\smallskip

\noindent
{\bf Proof of Theorem~\ref{thm:chord-belt}.}
Let the vertices of $\mathcal{A}_m$ be
$B_j=A_{j-1}A_{j-1+m}\cap A_jA_{j+m},$ and the vertices of $\mathcal{A}_{\ell }$ 
be $D_j=A_{j-1}A_{j-1+\ell }\cap A_jA_{j+\ell }.$ Due to the rotational symmetry, it is sufficient to prove that the 
lines $B_0B_{\ell }$ and $D_0D_m$ coincide.

Take line $A_0A_1$ and apply Lemma \ref{lem:rotate} twice. 
First, for the $n$-gon $\mathcal{A}_1$ and the transformation $\varphi _{\ell}$, then apply to the $n$-gon 
$\mathcal{A}_{\ell }$ and the transformation $\varphi _m$. 
$$\varphi _{m}(\varphi _{\ell }(A_0A_1))=\varphi _{m}(A_0A_{\ell })=
\varphi _{m}(D_0D_{1})=D_0D_m.$$ 
Second, for the $n$-gon $\mathcal{A}_1$ and the transformation $\varphi _{m}$, then apply to the $n$-gon 
$\mathcal{A}_{m}$ and the transformation $\varphi _{\ell }$. 
$$
\varphi _{\ell }(\varphi _{m}(A_0A_1))=\varphi _{\ell }(A_0A_{m})=
\varphi _{m}(B_0B_{1})=B_0B_{\ell }.
$$ 

Since the rotational 
similarities $\varphi _m$ and $\varphi _{\ell}$ have the same centre, they commute. Thus
$\varphi _{\ell }(\varphi _{m}(A_0A_1))=\varphi _{m}(\varphi _{\ell }(A_0A_1)),$ hence
the lines $B_0B_{\ell }$ and $D_0D_m$ coincide, which proves the theorem.
\hfill{$\Box $}

\smallskip

Kárteszi Theorem provides a simple way to construct geometric point-line configurations. 
Using the notation of Theorem~\ref{thm:chord-belt}, take the set of $3n$ elements consisting 
of the vertices of $\mathcal{A}_{1}, \mathcal{A}_{\ell}$ and $\mathcal{A}_{m}$, and take the 
set of  $3n$ elements consisting of the $\ell$-th and $m$-th  diagonals of $\mathcal{A}_{1}$ and 
the common lines of the $\ell$-th diagonals of $\mathcal{A}_m$ and the $m$-th  diagonals of $\mathcal{A}_{\ell }$. 
Each point is incident with at least four lines, and similarly each line is incident with at least 
four points. If additional incidences do not occur, then we get a $((3n)_4)$ configuration. We call 
these configurations \emph{Kárteszi configurations} and denote them by $\mathrm{K}(n;\ell ,m)$.

%%%%%%%%%%%%%%%%%%%%%%%%%%%%%%%%%%%%%%%%%%%%%%%%%%%%%%%%%%%%%%%%%%%%%%%%%%%%%

\section{When $\mathrm{K}(n;\ell,m)$ contains extra incidences} \label{sect:exceptional}

%%%%%%%%%%%%%%%%%%%%%%%%%%%%%%%%%%%%%%%%%%%%%%%%%%%%%%%%%%%%%%%%%%%%%%%%%%%%%

Theorem \ref{thm:chord-belt} is true for any 
$\ell ,m \in \{1,2,\dots , \lfloor\frac{n-1}{2}\rfloor \}$, but some vertices of $\mathcal{A}_{\ell }$ or 
$\mathcal{A}_{m}$ could 
incident with other diagonals of  $\mathcal{A}_1 $. In these cases, the result is not a $((3n)_4)$ configuration, 
because some lines are incident with more than $4$ points.

\begin{figure}[!h]
\begin{center}
\includegraphics[width=0.66\textwidth]{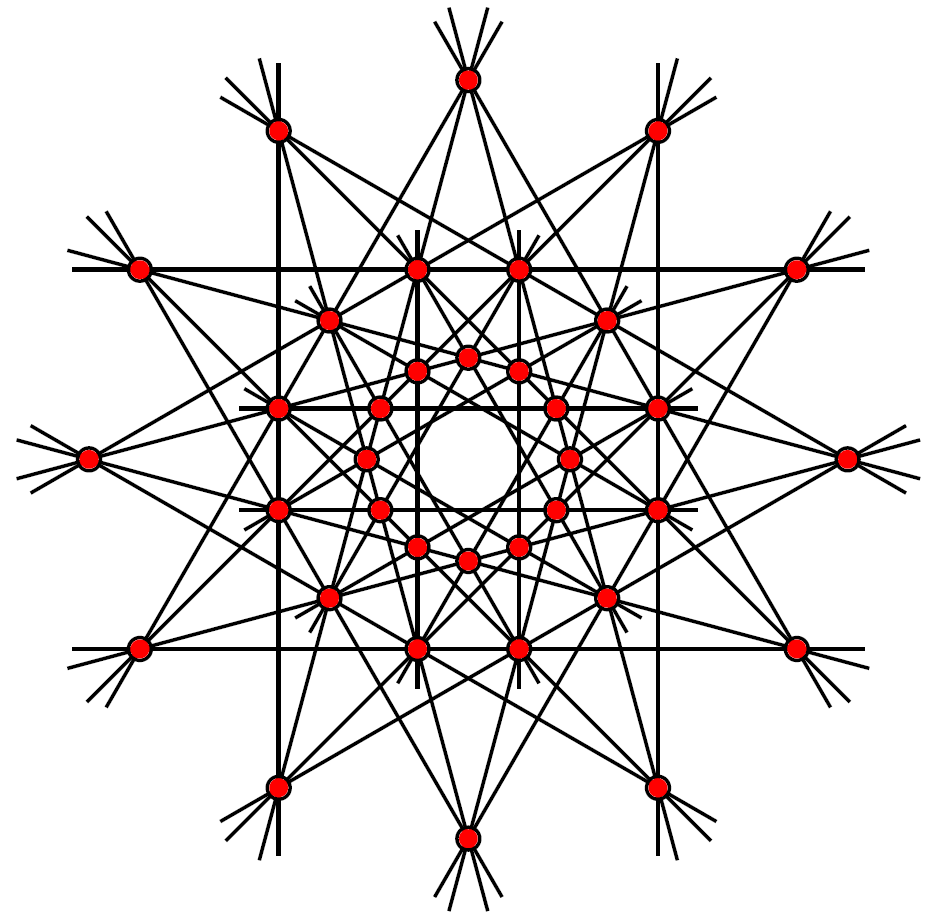}
\caption{The smallest combinatorial Kárteszi configuration $\mathrm{K}(12;4,5)$ has a geometric representation 
with extra incidences.}
 \label{fig:K12;45}
\end{center}
\end{figure}

Poonen and Rubinstein~\cite{PR} presented a complete list when three diagonals of a regular $n$-gon meet. 
In the following, we briefly summarise their results using their notation.

Let $A,B,C,D,E,F$ be six distinct vertices in the order of a regular $n$-gon inscribed into a unit circle 
subdividing the circumference into arcs of length 
$2\pi X$, $2\pi V$, $2\pi Y$, $2\pi W$, $2\pi Z$, $2\pi U$, respectively. 
Then the three diagonals $AD,BE,CF$ meet at a point if and only if
\begin{equation}
\label{3meet}
\sin(\pi U)\sin(\pi V)\sin(\pi W)=\sin(\pi X)\sin(\pi Y)\sin(\pi Z).
\end{equation}

\begin{figure} [!h]
    \centering
    \includegraphics[width=0.66\linewidth]{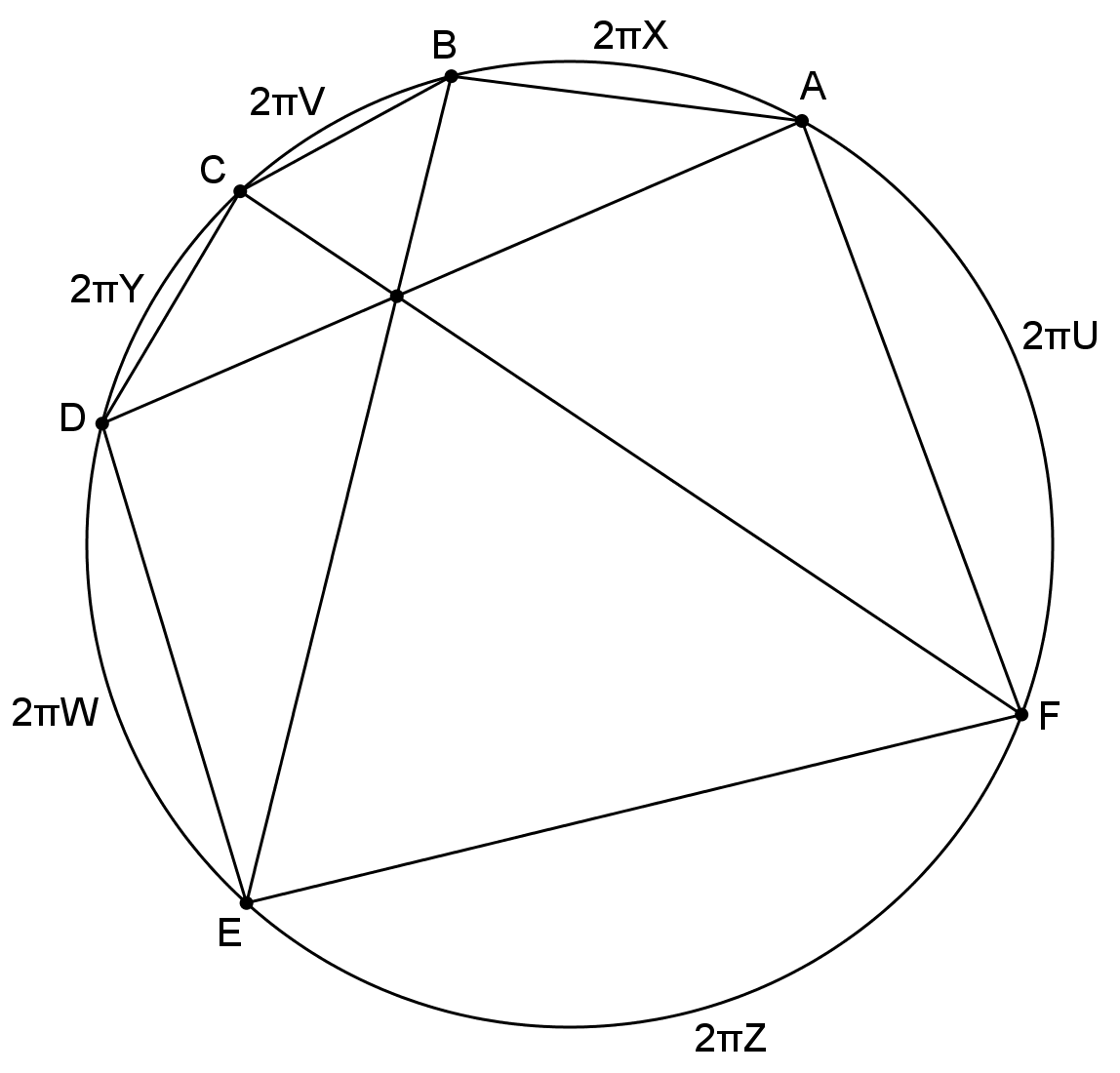}
    \caption{Three meeting diagonals in Theorem of Poonen and Rubinstein.}
    \label{fig:pr}
\end{figure}

They proved the following  \cite[Theorem 4]{PR}.
\smallskip
\begin{thm}[Poonen and Rubinstein, 1998] 
\label{pr-list}
The positive rational solutions of the trigonometric diophantine equation {\rm(\ref{3meet})}
in the case $U+V+W+X+Y+Z=1$ and each of them is a multiple of $\frac{1}{n}$ 
can be divided into three classes.
\begin{enumerate}
\item[$(i)$]
The trivial cases, when $U+V+W=1/2$ and $X,Y,Z$ are a permutation of 
$U,V,W.$
\item[$(ii)$]
Four infinite families, listed in the following table:

\begin{tabular}{||c|c||c||}
\hline
 $\{ U,V,W\} $ &  $\{X,Y,Z\} $ & Range \\
\hline
\hline
$\{ 1/6, t,1/3-2t\} $ & $\{ 1/3+t, t, 1/6-t \} $ & $0<t<1/6$ \\
\hline
$\{  1/6, 1/2-3t, t  \} $  & $\{  1/6-t , 2t, 1/6+t  \} $  & $0<t<1/6 $\\
\hline
$\{ 1/6, 1/6-2t, 2t   \} $ & $\{ 1/6-2t , t , 1/2+t,   \} $ & $0<t<1/12$ \\
\hline 
$\{ 1/3-4t, t, 1/3+t  \} $ & $\{ 1/6-2t, 3t,1/6+t  \} $& $0<t<1/12$ \\
\hline
\end{tabular}

\item[$(iii)$]
$65$ sporadic cases, their list can be found in {\rm \cite[Table 4]{PR}}.
\end{enumerate}
\end{thm}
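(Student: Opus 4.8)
The plan is to reduce the trigonometric equation (\ref{3meet}) to a \emph{vanishing sum of roots of unity} and then apply the structure theory of such sums. First I would expand each side using the product-to-sum identity
\begin{equation*}
4\sin A\sin B\sin C=\sin(A+B-C)+\sin(B+C-A)+\sin(C+A-B)-\sin(A+B+C).
\end{equation*}
Applying this with $A=\pi U,\,B=\pi V,\,C=\pi W$ on the left and with $X,Y,Z$ on the right, and then invoking the normalisation $U+V+W+X+Y+Z=1$, the two outlying terms $\sin\pi(U+V+W)$ and $\sin\pi(X+Y+Z)=\sin\pi\bigl(1-(U+V+W)\bigr)$ coincide and cancel. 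Equation (\ref{3meet}) is therefore equivalent to
\begin{equation*}
\begin{aligned}
&\sin\pi(U+V-W)+\sin\pi(V+W-U)+\sin\pi(W+U-V)\\
&\qquad=\sin\pi(X+Y-Z)+\sin\pi(Y+Z-X)+\sin\pi(Z+X-Y),
\end{aligned}
\end{equation*}
a relation among six sines whose arguments are all rational multiples of $\pi$ with denominator dividing $n$.

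Next I would write each $\sin\pi(k/n)=(\zeta^{k}-\zeta^{-k})/(2i)$ with $\zeta=e^{\pi i/n}$ a $2n$-th root of unity, turning the displayed identity into the statement that a sum of at most twelve roots of unity vanishes. At this point the problem becomes one of classifying vanishing sums of roots of unity of bounded length. The key imported tool is the theorem of Conway and Jones, which describes every \emph{minimal} vanishing sum of roots of unity with few summands as a rational combination of a short explicit list: the trivial two-term relation $\eta+(-\eta)=0$, the full sets of $p$-th roots of unity $\sum_{j=0}^{p-1}\zeta_{p}^{\,j}=0$ for a prime $p$, and a finite collection of \emph{sporadic} relations supported on the primes $5,7$ and a few small composites.

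The core of the argument is then a finite case analysis: I would partition the twelve-term vanishing sum into Conway--Jones minimal relations and track which partitions are compatible with the cyclic ordering $A,B,C,D,E,F$ and with the positivity of $U,\dots,Z$. Purely pairwise cancellations of the six sine terms force $\{X,Y,Z\}$ to be a permutation of $\{U,V,W\}$ together with $U+V+W=1/2$, yielding the trivial class $(i)$. Relations that involve a prime $p$ dividing $n$ but leave one free parameter produce one-parameter solutions; solving the resulting linear constraints gives exactly the four infinite families of $(ii)$, the stated ranges arising from the requirement that every entry be positive. Finally, each sporadic Conway--Jones relation contributes finitely many rigid solutions, and assembling these---after discarding duplicates and those violating positivity---yields the list of $65$ sporadic cases in $(iii)$.

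The hard part will be the exhaustive case analysis of how the twelve roots of unity split into minimal relations, because the number of a priori partitions is large and many are ruled out only after delicate bookkeeping of signs, of the cyclic order of the six points, and of the compatibility of denominators. Verifying that the sporadic list is complete---and that it contains exactly $65$ entries---is a finite but substantial computation, best organised and checked with machine assistance; this is where the bulk of the real work, and the chief risk of error, resides.
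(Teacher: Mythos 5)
This statement is not proved in the paper at all: it is an imported result, quoted with attribution from Poonen and Rubinstein \cite[Theorem~4]{PR}, and the paper's ``proof'' is simply the citation. Your outline is therefore not competing with an argument in this paper but with the original one in \cite{PR} --- and there it matches the genuine strategy quite well: the reduction of (\ref{3meet}) via the product-to-sum identity (which you state correctly, and the normalisation $U+V+W+X+Y+Z=1$ does make the two $\sin\pi(U+V+W)$-type terms cancel), the passage to a vanishing sum of roots of unity, and the appeal to the Conway--Jones classification of minimal vanishing sums are exactly the tools Poonen and Rubinstein use. That said, as a proof your text is only a scaffold: the entire mathematical content of Theorem~\ref{pr-list} lives in the ``hard part'' you defer --- the exhaustive matching of the twelve-term relation against the Conway--Jones list, the derivation of precisely four one-parameter families with the stated ranges, and above all the completeness of the list of $65$ sporadic solutions, which in \cite{PR} is a lengthy, partly computer-assisted case analysis. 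Within the context of this paper the correct move is the one the authors make, namely to cite \cite{PR}; if you wanted to include a proof you would essentially have to reproduce that computation, and your sketch, while pointed in the right direction, does not yet do so.
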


Using this result, we can specify the cases where 
$\mathrm{K}(n;\ell,m)$ contains extra incidences. 
First, we prove a lemma from elementary geometry.

\begin{lem}
\label{3-bol4}
Suppose that an $r$-th diagonal $d$ of $\mathcal{A}_1 $ is incident with a vertex $B$ of $\mathcal{A}_k$. 
If $B$ is different from the midpoint of $d$, then $d$ is incident with another vertex of $\mathcal{A}_k $, 
hence $d$ has a common line with a diagonal of $\mathcal{A}_k $.

Let $B_i=A_iA_{i+k}\cap A_{i+1}A_{i+1+k}$ denote the vertices of
$\mathcal{A}_k $ (the subscripts are calculated modulo $n$). 
If the diagonal $A_0A_{r}$ is incident with $B_i$, $r<\frac{n}{2},$ $\frac{r}{2}<i$ and 
$2i-r+k+1<\frac{n}{2}$, then $B_{r-i-k-1}$ is also incident with $A_0A_{r}$. 
The $r$-th diagonals of $\mathcal{A}_1 $
have a common line with the $(2i-r+k+1)$-th diagonals of $\mathcal{A}_k $. 
\end{lem}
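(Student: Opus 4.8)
The plan is to exploit a single reflection symmetry shared by $\mathcal{A}_1$ and $\mathcal{A}_k$. By the rotational symmetry of the whole configuration it suffices to treat the diagonal $d=A_0A_r$. Let $\ell$ be the perpendicular bisector of the chord $d$; this is the diameter through the centre $C$ and the midpoint of the arc $A_0A_r$, i.e.\ the line at angle $\frac{r\beta}{2}$. Let $\sigma$ denote the reflection in $\ell$. Since $\ell$ is the perpendicular bisector of $d$, the reflection $\sigma$ maps the line $d$ onto itself, interchanging $A_0$ and $A_r$, and a point of $d$ is fixed by $\sigma$ precisely when it is the midpoint of $d$.

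The first key step is to show that $\sigma$ is a symmetry of $\mathcal{A}_k$. I would argue synthetically: $\sigma$ acts on the vertices of $\mathcal{A}_1$ by $A_j\mapsto A_{r-j}$ (indices mod $n$), because $\ell$ is the axis through arc-position $r/2$. Consequently every $k$-th diagonal $A_iA_{i+k}$ is sent to $A_{r-i-k}A_{r-i}$, again a $k$-th diagonal; thus $\sigma$ permutes the family of $k$-th diagonals of $\mathcal{A}_1$. Since the vertices of $\mathcal{A}_k$ are exactly the intersection points of consecutive members of this family, $\sigma$ maps the vertex set of $\mathcal{A}_k$ to itself. Hence if a vertex $B$ of $\mathcal{A}_k$ lies on $d$, then $\sigma(B)$ is again a vertex of $\mathcal{A}_k$ and, as $\sigma(d)=d$, it also lies on $d$. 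Provided $B$ is not the midpoint of $d$ (equivalently $B\notin\ell$), we have $\sigma(B)\neq B$, so $d$ carries two distinct vertices of $\mathcal{A}_k$ and therefore coincides with a diagonal line of $\mathcal{A}_k$. This establishes the first assertion.

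For the explicit index version I would track $\sigma$ on the labelled vertices. Writing $B_i=A_iA_{i+k}\cap A_{i+1}A_{i+1+k}$ and applying $A_j\mapsto A_{r-j}$ to both defining diagonals, one identifies $\sigma(B_i)=A_{r-i-k-1}A_{r-i-1}\cap A_{r-i-k}A_{r-i}=B_{r-i-k-1}$. Thus if $A_0A_r$ passes through $B_i$ it also passes through $B_{r-i-k-1}$, and the line $B_iB_{r-i-k-1}$ is the $(2i-r+k+1)$-th diagonal of $\mathcal{A}_k$, since the two indices differ by $2i-r+k+1$. It remains to check that the hypotheses make these statements meaningful: $r/2<i$ gives $2i-r+k+1\ge k+2>0$, which both guarantees $B_i\neq B_{r-i-k-1}$ (so $B_i$ is not the midpoint of $d$) and makes the diagonal order positive; the bounds $r<n/2$ and $2i-r+k+1<n/2$ keep the two diagonal orders in the canonical range $\{1,\dots,\lfloor (n-1)/2\rfloor\}$.

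The only point requiring care — the main, if modest, obstacle — is the claim that the perpendicular bisector of a chord of $\mathcal{A}_1$ is a symmetry axis of $\mathcal{A}_k$, which is not obvious because $\mathcal{A}_k$ is rotated relative to $\mathcal{A}_1$ through the angle $\frac{k-1}{2}\beta$ by Lemma~\ref{lem:rotate}. The diagonal-permutation argument above sidesteps this cleanly; alternatively, I could verify it by computation, placing the vertices at $n$-th roots of unity and using Lemma~\ref{lem:rotate} to write $B_i=\lambda\,e^{\mathrm{i}\pi(2i+k+1)/n}$ with $\lambda=\frac{\cos(k\beta/2)}{\cos(\beta/2)}$, whence the incidence of $B_i$ with $d$ and the image $\sigma(B_i)=B_{r-i-k-1}$ both follow from a one-line trigonometric identity.
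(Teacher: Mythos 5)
Your proof is correct and follows essentially the same route as the paper's: reflect in the perpendicular bisector of $d$, observe that this reflection is a symmetry of $\mathcal{A}_k$ sending $B_i$ to $B_{r-i-k-1}$, and read off the diagonal order from the index difference. The only difference is that you justify more explicitly (via the permutation of the $k$-th diagonals) why the axis of symmetry of $\mathcal{A}_1$ is also one of $\mathcal{A}_k$, a step the paper merely asserts.
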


\begin{proof}
Let $t$ be the perpendicular bisector of $d$. Then $t$ is an axis of symmetry of $\mathcal{A}_1 $, 
hence it is also an axis of symmetry of $\mathcal{A}_k $. Thus, if $\varphi$ denotes the reflection 
in $t$, then $\varphi (B)\neq B$ is another vertex of $\mathcal{A}_k $ and $d$ has a common line 
with the diagonal  $B\varphi (B)$ of $\mathcal{A}_k $.

If $d$ is the diagonal $A_0A_{r}$, then we have $\varphi (A_j)=A_{r-j}$ 
for $j=0,1,\dots ,n-1.$ So 
$$\varphi (B_i)=\varphi (A_{i}A_{i+k}\cap A_{i+1}A_{i+1+k}) =A_{r-i}A_{r-i-k}
\cap A_{r-i-1}A_{r-i-k-1}.$$ 
Since $i+1+k< n-(r-i-k-1)$ we get $A_{r-i-1}A_{r-i-k-1}=B_{r-k-i-1}$. 
Thus, $d$ is incident with $B_{r-k-i-1}$, so it has a common  line with the diagonal 
$B_iB_{r-k-i-1}$ of $\mathcal{A}_k $. This diagonal is a $(2i-r+k+1)$-th diagonal, 
because $2i-r+k+1<n/2.$ By rotational symmetry, this implies that any
$r$-th diagonal of $\mathcal{A}_1 $
has a common line with a suitable $(2i-r+k+1)$-th diagonal of $\mathcal{A}_k $. 
\end{proof}

\begin{thm}
\label{thm:extra-inc}
$\mathrm{K}(n;\ell,m)$ is a $((3n)_4)$ configuration except in the following cases:

\begin{itemize}
\item
$\mathrm{K}(6k; 2k,3k-1)$, $\mathrm{K}(6k;3k-2,3k-1)$ and $\mathrm{K}(6k;2k,3k-2)$  for $k\geq 2.$ 
\item
$\mathrm{K}(12k+6; 3k+1,4k+2)$, $\mathrm{K}(12k+6;3k+2,4k+2)$ and $\mathrm{K}(12k+6;3k+1,3k+2)$ for $k\geq 1.$ 
\item
$\mathrm{K}(30;4,6)$, $\mathrm{K}(30;4,7)$ and $\mathrm{K}(30;6,7)$;
\item
$\mathrm{K}(30;6,10)$, $\mathrm{K}(30;6,11)$ and $\mathrm{K}(30;10,11)$;
\item
$\mathrm{K}(30;8,12)$, $\mathrm{K}(30;8,13)$ and $\mathrm{K}(30;12,13)$;
\item
$\mathrm{K}(42;6,12)$, $\mathrm{K}(42;6,13)$ and $\mathrm{K}(42;12,13)$.
\end{itemize}
\end{thm}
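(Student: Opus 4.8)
The plan is to prove both directions at once by reducing every extra incidence to a concurrence of three diagonals of the regular $n$-gon $\mathcal{A}_1$ and then reading the complete list of such concurrences off the Poonen--Rubinstein classification, Theorem~\ref{pr-list}. In $\mathrm{K}(n;\ell,m)$ every configuration point is an intersection of two diagonals of $\mathcal{A}_1$ (the vertices of $\mathcal{A}_\ell$ and $\mathcal{A}_m$ being, by definition, intersections of consecutive $\ell$- and $m$-diagonals), while every configuration line either is an $\ell$- or $m$-diagonal of $\mathcal{A}_1$ or, by Theorem~\ref{thm:chord-belt}, coincides with a diagonal of $\mathcal{A}_\ell$ or $\mathcal{A}_m$. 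After checking that each kind of potential extra incidence reduces to the same situation, one is left with the case treated by Lemma~\ref{3-bol4}: an $r$-th diagonal of $\mathcal{A}_1$ passing through a vertex $B$ of $\mathcal{A}_k$ with $k,r\in\{\ell,m\}$. That lemma converts such an incidence into a three-diagonal statement, since if $B$ is not the midpoint of the diagonal then the diagonal meets $\mathcal{A}_k$ a second time and hence carries a fifth configuration point.

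Next I would translate a concurrence into equation~(\ref{3meet}). The three concurrent diagonals have six endpoints among the vertices of $\mathcal{A}_1$; writing the six arcs they subtend as multiples of $\frac1n$ yields a rational solution of~(\ref{3meet}) with $U+V+W+X+Y+Z=1$, and conversely any such solution whose steps lie in the Kárteszi range $1<\ell<m\le\lfloor\frac{n-1}{2}\rfloor$ forces an extra incidence. The decisive point is that the midpoint case of Lemma~\ref{3-bol4}, in which no second vertex and hence no extra incidence arises, matches case $(i)$ of Theorem~\ref{pr-list}: these trivial solutions, with $\{X,Y,Z\}$ a permutation of $\{U,V,W\}$, are exactly the concurrences lying on an axis of symmetry of $\mathcal{A}_1$, and they are discarded. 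In the language of the Proposition this is transparent: an off-axis concurrence supplies an integer $x$ for which the astral configuration $n\#(\ell,m,x)$ exists, and then $\{\ell,m,x\}$ is the set of three diagonal-steps forming an exceptional triple.

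The core of the argument is to push the surviving solutions through this dictionary. For the four infinite families of case $(ii)$ I would substitute $t=j/n$, so that all six arcs become multiples of $\frac1n$, determine the admissible values of $n$, and identify which arcs yield the steps $\ell,m,x$; this should return precisely the two infinite families of exceptions, namely $\{2k,3k-2,3k-1\}$ at $n=6k$ and $\{3k+1,3k+2,4k+2\}$ at $n=12k+6$, each producing the three listed configurations $\mathrm{K}(n;\ell,m)$, $\mathrm{K}(n;\ell,x)$, $\mathrm{K}(n;m,x)$. For the $65$ sporadic solutions of case $(iii)$ I would impose the same integrality and range conditions on each one; only those realised at $n=30$ and $n=42$ keep all three steps strictly between $1$ and $\lfloor\frac{n-1}{2}\rfloor$, and they give the four sporadic triples of the statement. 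In every remaining case there is no off-axis concurrence, so all incidences are the designed ones and $\mathrm{K}(n;\ell,m)$ is a genuine $((3n)_4)$ configuration.

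I expect the main obstacle to be the case analysis in this last step rather than any single computation. Matching a Poonen--Rubinstein parametrization to a triple $(n;\ell,m)$ requires choosing which of the arc-triples $\{U,V,W\}$, $\{X,Y,Z\}$ carries the three diagonal-steps (these are sums of consecutive arcs, not single arcs) and then checking that the resulting steps are distinct, lie in $\{2,\dots,\lfloor\frac{n-1}{2}\rfloor\}$, and do not degenerate into a trivial coincidence; for small $k$ two of the three steps may coincide, so several candidate triples must be reinterpreted or eliminated precisely because they fail one of these tests. Establishing completeness---that none of the $65$ sporadic cases outside $n\in\{30,42\}$ yields an admissible triple, and that the substitution in the infinite families produces nothing beyond the two families listed---is the finite but laborious verification on which the theorem rests, whereas the geometric reduction of the first two paragraphs and the appeal to Theorem~\ref{pr-list} are comparatively routine.
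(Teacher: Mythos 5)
Your overall route coincides with the paper's: reduce every extra incidence to a concurrence of three diagonals of $\mathcal{A}_1$, classify the concurrences via Theorem~\ref{pr-list}, discard the trivial solutions, and tabulate the rest. However, the one observation that makes the Poonen--Rubinstein case analysis both finite and correct is missing from your plan. Every point of $\mathrm{K}(n;\ell,m)$ is the intersection of two \emph{consecutive} diagonals of the same length, so in any concurrence arising from an extra incidence two of the three diagonals are consecutive equal-length diagonals; in the notation of equation~(\ref{3meet}) this forces two of the six arcs, one from each triple and in adjacent positions, to equal $\frac{1}{n}$ (the paper normalises $X=W=\frac{1}{n}$ and then lets this constraint pin down $t$ and hence $n$ in each family). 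Your substitute filter --- that the three diagonal-steps be distinct and lie in $\{2,\dots,\lfloor\frac{n-1}{2}\rfloor\}$ --- is neither sufficient nor necessary. Not sufficient: the first family of Theorem~\ref{pr-list} with $t=j/n$ for $j\geq 2$ yields, for every $n$ divisible by $6$, a genuine concurrence of three diagonals whose steps are typically all in range, but whose meeting point is not a vertex of any $\mathcal{A}_k$; such a concurrence creates no extra incidence in any K\'arteszi configuration, yet your test would flag it. Not necessary: the genuine exception $\mathrm{K}(12;4,5)$ comes from the triple $(r,\ell_1,\ell_2)=(5,4,4)$ with two coincident steps, which a ``distinct steps'' test would throw away. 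Without the $X=W=\frac{1}{n}$ constraint the substitution $t=j/n$ does not determine $n$ as a function of a single parameter $k$, so the two infinite families $n=6k$ and $n=12k+6$ cannot be extracted as you describe. Appealing to the unproved Proposition of the introduction does not close this gap, since deciding when $n\#(\ell,m,x)$ exists requires exactly the same analysis.

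Two further points. Your claim that the midpoint case of Lemma~\ref{3-bol4} yields ``no second vertex and hence no extra incidence'' is a non sequitur: a vertex of $\mathcal{A}_k$ sitting at the midpoint of a configuration line would still be a fifth point on that line. The paper eliminates case $(i)$ of Theorem~\ref{pr-list} for a different reason: once $X=W=\frac{1}{n}$, a trivial permutation forces one of the three diagonals to be a diameter, and diameters never occur among the diagonals $A_jA_{j+r}$ with $r\leq\lfloor\frac{n-1}{2}\rfloor$. Finally, Lemma~\ref{3-bol4} is needed not only to start the reduction but also to finish it: the computation $2i-r+\ell_1+1=\ell_2$, combined with Theorem~\ref{thm:chord-belt}, is what shows that each exceptional arc pattern produces a line through six configuration points and hence contaminates all three of $\mathrm{K}(n;\ell_1,\ell_2)$, $\mathrm{K}(n;\ell_1,r)$, $\mathrm{K}(n;\ell_2,r)$; you assert this grouping but supply no mechanism for it.
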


\begin{proof}
If an extra incidence occurs in $\mathrm{K}(n;\ell,m)$, then two of the three diagonals that have a common point 
are consecutive of the same length. Again, using the notation of Figure 1 and the results of Poonen and Rubinstein, 
this implies that the smallest element in both sets 
$\{ X,Y,Z\} $ and $\{ U,V,W\} $ must be $\frac{1}{n}$. 
We may assume without loss of generality that 
$X=W=\frac{1}{n}$. 

Consider the three possibilities of Theorem \ref{pr-list}.
In Case $(i)$, either $Y=U$ and $Z=V,$ or $Y=V$ and $Z=U.$ In the former case $AD,$ in the latter case $CF$ 
is a diameter of the circle, so the trivial solutions do not appear in the Kárteszi configurations.
\medskip

Case $(ii)$. In the first family $t=\frac{1}{n}$, otherwise $\frac{1}{n}=1/3-2t=1/6-t$
would imply $t=1/6.$ Thus there exists an integer $k$ such that $1/6= \frac{k}{n},$ so
$$n=6k, \quad \left\{ U,V\right\} =\left\{ \frac{k}{6k}, \frac{2k-2}{6k}\right\} , 
\quad \left\{ Y,Z\right\} =\left\{ \frac{k-1}{6k}, \frac{2k+1}{6k}\right\} .$$
In the second family the smallest element in $\{ X,Y,Z\} $ is $1/6-t.$ Again,
$\frac{1}{n}=1/3-2t=1/6-t$ implies $t=1/6,$ hence $1/6-t=t,$ so $t=1/12,$ this
results in a trivial solution $\{1/12, 1/6, 1/4\} .$ In the third family the smallest element in 
$\{ X,Y,Z\} $ is either $t$ or $1/6-2t,$ hence the smallest element in $\{ U,V,W\} $  
must be $1/6-2t.$ Thus $1/6-2t=\frac{1}{n}$ and there exists an integer $k$ such that 
$t= \frac{k}{n}.$ So 
$$
n=6(2k+1), \, \left\{ U,V\right\} =
\left\{ \frac{2k+1}{6(2k+1)}, \frac{2k}{6(2k+1)}\right\} , \, \left\{Y,Z\right\} =
\left\{ \frac{k}{6(2k+1)}, \frac{7k+3}{6(2k+1)}\right\}.
$$
In the fourth family the smallest element in $\{ X,Y,Z\} $ is $1/6-2t.$ 
If $\frac{1}{n}=1/6-2t=t,$ then $t=1/18$ and this solution appears in the first family, too. The other case, 
$\frac{1}{n}=1/6-2t=1/3-4t$, 
would imply $t=1/12,$ contradicting condition $t<1/12.$ 
\medskip

Case $(iii)$. Among the 65 sporadic examples, only four contain $\frac{1}{n}$ in both sets.

Now consider the two infinite families and the
$4$ sporadic cases. Because of symmetry, we may assume that 
$$U+X+V=U+V+\frac{1}{n}=U+V+W<\frac{1}{2} \, \text{  and  } \, V\leq U.$$
Then $CF$ is an $r$-th diagonal of the regular $n$-gon with 
$r=n(U+V)+1<\frac{n}{2}$, and  
$AD$ and $BE$ are $\ell$-th diagonals with $\ell =n(V+Y)+1$ if 
$X+V+Y<\frac{1}{2}$ or $\ell =n(1-(V+Y))-1$ if $X+V+Y>\frac{1}{2}$.
Since we have two choices for $Y$, there are two possibilities for $\ell .$ 
If $Y_1<Z_1,$ then $X+V+Y_1<\frac{1}{2}$, so $\ell _1=n(V+Y_1)+1.$
In the other case, $Y_2=Z_1>Y_1=Z_2$ and $X+V+Y_2>\frac{1}{2}$. So
$\ell _2=n(1-(V+Y_2))-1=n(1-(V+Z_1))-1=n(U+Y_1)+1.$
The exceptional triples of diagonals $(r,\ell _1,\ell _2)$ are listed in the following table:

\bigskip
\noindent
\renewcommand{\arraystretch}{1.5}
\begin{tabular}{||c|ccc|c|cc|cc||}
\hline
$n$ & $W$ & $V$ & $U$ &  $r$ & $X$ & $\{ Y,Z\}$ & $\ell _1$ &  $\ell _2$ \\
\hline
\hline
%& & & & & & &  &  \\
$6k$ & $\frac{1}{6k}$ & $\frac{k}{6k}$ & $\frac{2k-2}{6k}$ & $3k-1$ & 
$\frac{1}{6k}$ & $\{ \frac{k-1}{6k}, \frac{2k+1}{6k}\} $ &  $2k$ &  $3k-2$ \\[4pt]
%& & & & & & &  &  \\
\hline
%& & & & & & &  &  \\
$12k+6$ & $\frac{1}{12k+6} $ & $\frac{2k}{12k+6} $ & $\frac{2k+1}{12k+6} $ & 
$4k+2$ & $\frac{1}{12k+6} $ & $\left\{ \frac{k}{12k+6}, \frac{7k+3}{12k+6}\right\} $ & 
$3k+1$ & $3k+2$ \\[4pt]
%& & & & & & &  &  \\
\hline
%& & & & & & &  &  \\
30 & $\frac{1}{30}$ & $\frac{2}{30}$ & $\frac{4}{30}$ & 7 & $\frac{1}{30}$ & 
$\{ \frac{1}{30} , \frac{21}{30} \}$ & 4 & $6$ \\
%& & & & & & &  &  \\
30 & $\frac{1}{30}$ & $\frac{3}{30}$  & $\frac{7}{30}$ & 11 & $\frac{1}{30}$ & 
$\{ \frac{2}{30} , \frac{16}{30} \}$ & 6 & $10$ \\
%& & & & & & &  &  \\
30 & $\frac{1}{30}$  & $\frac{4}{30}$ & $\frac{8}{30}$ & 13& $\frac{1}{30}$ & 
$\{ \frac{3}{30} , \frac{13}{30} \}$ & 8 & 12 \\[4pt]
%& & & & & & &  &  \\
\hline
%& & & & & & &  &  \\
42 & $\frac{1}{42}$ & $\frac{3}{42}$  & $\frac{9}{42}$ & 13 & $\frac{1}{42}$ & 
$\{ \frac{2}{42}, \frac{26}{42} \}$ & 6 & 12 \\[4pt]
%& & & & & & &  & \\
\hline
% \vspace{3 mm}
\end{tabular}

\bigskip
%When a vertex $P$ of $\mathcal{A}_{\ell }$ is on an $r$-th diagonal, then the diagonal also contains the image of $P$ under the reflection about the midpoint of the diagonal. Hence 
If an $r$-th diagonal contains a vertex of $\mathcal{A}_{\ell _1}$, then $r,\ell _1$
and $i=nU$  satisfy the conditions of Lemma \ref{3-bol4}. By this lemma, the $r$-th  diagonal has a common line 
with a $(2i-r+\ell _1+1)$-th diagonal of $\mathcal{A}_{\ell _1}$. Since
$$
2i-r+\ell _1+1=n(2U-( U+V+W) +(X+V+Y_1))+1=n(U+Y_1)+1=\ell_2,
$$
Theorem \ref{thm:chord-belt} implies that this line also contains an $\ell _1$-th diagonal 
of $\mathrm{A}_{\ell _2}$. Hence, six points of the $3n$ vertices of $\mathrm{K}(n;\ell _1,\ell _2)$ are also 
collinear. We get the smallest example in the first family when $k=2$, in this case $2k=3k-2$ so there is only 
one configuration, $\mathrm{K}(12;4,5)$. 
Each other triple $(r,\ell _1,\ell _2)$ corresponds to three K configurations
if $\ell_1\neq \ell _2,$ so in these cases
$\mathrm{K}(n;\ell _1,\ell _2)$, $\mathrm{K}(n;\ell _1,r)$ and $\mathrm{K}(n;\ell _2,r)$ are not $((3n)_4)$ 
configurations. This proves the theorem.
\end{proof}

%%%%%%%%%%%%%%%%%%%%%%%%%%%%%%%%

\section{Conclusion}

In this paper, we present the evidence that the Grünbaum--Rigby configuration has been constructed by 
Ferenc Kárteszi already in 1964. We also point out that a theorem by Kárteszi gives rise to a special 
3-parameter family of trivial 3-celestial 4-configurations that we named \emph{Kárteszi configurations}. 
Using geometric arguments, we give an independent characterization of the validity of these parameters. 

It would be interesting to investigate to what extent the Kárteszi Theorem can be adapted to cover more 
general celestial or even other polycyclic 4-configurations.

%%%%%%%%%%%%%%%%%%%%%%%%%%%%%%%%

\section*{Acknowledgements}
Toma\v{z} Pisanski is supported in part by the Slovenian Research Agency (research program P1-0294 and research projects J1-4351, J5-4596 and BI-HR/23-24-012).
Gy\"orgy Kiss is supported in part by the Slovenian Research Agency, research program N1-0429.
\bigskip

%%%%%%%%%%%%%%%%%%%%%%%%%%%%%%%%%%%%%%%%%%%%%%%%%%%%%%%
%\newpage

%%%%%%%%%%%%%%%%%%%%%%%%%%%%%%%%%%%%%%%%%%%%%%%
%\nocite{*}
% \bibliography{karteszi}

\begin{thebibliography}{99} 

%%%%%%%%%%%%%%%%%%%%%%%%%%%%%%%%%%%%%%%%%%%%%%%

\bibitem{BB}
A.\ Berardinelli and L.\ W.\ Berman,
Systematic celestial 4-configurations,
\emph{Ars Math.\ Contemp.} {\bf 7} (2014), 361--377. 

\bibitem{Be2001}
L.\ W.\ Berman, A characterization of astral $(n_4)$ configurations, 
\emph{Discrete Comput. Geom.} {\bf 26}
(2001), 603--612.

\bibitem{BGP2024}
L.\ W.\ Berman, G.\ G\'evay and T.\ Pisanski,
On a new $(21_4)$ polycyclic configuration,
\emph{Electronic J.\ Combin.} {\bf 31} (4) (2024), \#P4.54.

\bibitem{BoPi2003}
M.\ Boben and T.\ Pisanski,
Polycyclic configurations,
\emph{European J.\ Combin.} {\bf 24} (4) (2003) 431--457.

\bibitem{Cox1950}
H.\ S.\ M.\ Coxeter, 
Self-dual configurations and regular graphs,
\emph{Bull.\ Amer.\ Math.\ Soc.} {\bf 56} (1950), 413--455.


\bibitem{Gru}
B.\ Gr\"unbaum, 
\emph{Configurations of Points and Lines}, 
Graduate Texts in Mathematics, Vol.\ 103,
American Mathematical Society, Providence, Rhode Island, 2009.

\bibitem{GR}
B.\ Gr\"unbaum and J.\ F.\ Rigby, 
The real configuration $(21_4)$, 
\emph{J.\ London Math.\ Soc.} {\bf 41} (1990), 336--346.

\bibitem{HCV1932}
D.\ Hilbert and Cohn-Vossen,
\emph{Anschauliche Geometrie},
Berlin, Springer, 1932.

\bibitem{Kar65}
F.\ K\'arteszi, 
Intorno a punti allineati di certi reticoli circolari, 
\emph{Rend.\ Sem.\ Mat.\ Messina} {\bf 9} (1964/65), 1--12.

\bibitem{Kar1972}
F.\ K\'arteszi, 
\emph{Introduction to Finite Geometries},
Akadémiai Kiadó, 1972 (in Hungarian). 
Reprinted in English by Elsevier/North Holland, 1976, in Italian by Feltrinelli, 1978, and in Russian
by Nauka, 1980.

\bibitem{Kar1986}
F.\ K\'arteszi,
On a surprising analogy,
\emph{Ann.\ Univ. Sci.\ Budapest Eötvös Sect.\ Math.} {\bf 29} (1986), 257--259 (in Italian).

\bibitem{KSz2008}
G.\ Korchmáros and T.\ Szőnyi,
Preface [Special issue dedicated to the centenary of the birth of Ferenc Kárteszi].
\emph{Contrib.\ Discrete Math.} {\bf 3} (1) (2008), 1--2.

\bibitem{KSz2008a}
G.\ Korchmáros and T.\ Szőnyi,
Ferenc K\'arteszi (1907--1989): A short biography,
\emph{Contrib.\ Discrete Math.} {\bf 3} (1) (2008), 3--5.

\bibitem{Levi}
F.\ W.\ Levi, 
\emph{Geometrische Konfigurationen}, Hirzel, Leipzig, 1929.

\bibitem{PS}
T.\ Pisanski and B.\ Servatius,
\emph{Configurations from a Graphical Viewpoint}, 
Birk\-h\"auser Advanced Texts, 
Birkh\"auser, New York, 2013.

\bibitem{PR}
B.\ Poonen and M\ Rubinstein,
The number of intersection points made by the diagonals of a regular polygon,
\emph{SIAM J.\ Discrete Math.} {\bf 11} (1) (1998), 135--156. 

\end{thebibliography}
% \bibliographystyle{plain} 

%%%%%%%%%%%%%%%%%%%%%%%%%%%%%%%%%%%%%%%%%%%%%%%%%%%%%%%

\end{document}